\newtheorem{theorem}{Theorem}[section]
\newtheorem{corollary}[theorem]{Corollary}
\newtheorem{lemma}[theorem]{Lemma}
\newtheorem{remark}[theorem]{Remark}
\renewenvironment{proof}[1][Proof]{\noindent\textbf{#1.} }{\ \rule{0.5em}{0.5em}}
\begin{document}
\title[Left invariant spray structure on Lie group]{Left invariant spray structure on Lie group}
\author{Ming Xu}

\address{Ming Xu \newline
School of Mathematical Sciences,
Capital Normal University,
Beijing 100048,
P. R. China}
\email{mgmgmgxu@163.com}

\date{}

\begin{abstract}
We use the technique of invariant frame to study the left invariant
spray structure on a Lie group, and calculate its S-curvature and Riemann curvature, which generalizes the corresponding formulae in  homogeneous Finsler geometry. Using the canonical bi-invariant Berwald spray structure as the reference, any left invariant spray structure can be associated with a spray vector field on the Lie algebra. We find the correspondence between the geodesics for a left invariant spray structure and the inverse integral curves of its spray vector field. As an application for this correspondence, we provide an alternative proof for Landsberg Conjecture in the case of homogeneous Finsler surfaces.

\textbf{Mathematics Subject Classification (2010)}:
53B40, 53C30, 53C60

\textbf{Key words}: Finsler metric, Landsberg Conjecture, left invariant frame, Lie group, Riemann curvature, S-curvature, spray structure
\end{abstract}
\maketitle

\section{Introduction}

On a Finsler manifold $(M,F)$, every geodesic with positive constant speed can be lifted to an integral curve for the smooth tangent vector field $\mathbf{G}_F$ on $TM\backslash0$ with the standard local coordinate presentation $$\mathbf{G}_F=y^i\partial_{x^i}-2\mathbf{G}_F^{i}\partial_{y^i},
\ \
\mbox{in which}\ \
\mathbf{G}_F^i=\tfrac14 g^{il}([F^2]_{x^ky^l}y^k-[F^2]_{x^l}).$$
This $\mathbf{G}_F$, which may be called a {\it Finsler spray structure} on $M$, is crucial for studying Finsler geometry. Many notions, like geodesic, Riemann curvature, etc, may be defined from the spray and without direct appearance of the metric \cite{BCS2000}, so they can be studied in more general context \cite{Be1926}. This observation brings us {\it spray geometry}.

In spray geometry, a manifold $M$ is only endowed with a spray structure, i.e., a smooth tangent vector field $\mathbf{G}$ on $TM\backslash0$ satisfying certain mild requirement. The purpose of imposing the spray structure is to specify the set of all geodesics (with required parametrizations) on $M$ \cite{Sh2001}. See Section 2.2 and Section 2.3 for the details and related notions. There are many spray structures which can not be realized by Finsler metrics \cite{LS2017,Ya2011}. They exhibit for us interesting dynamic or geometric phenomena which are unseen in Finsler geometry.

Lie method can be applied to spray geometry. A manifold $M$ with a spray structure $\mathbf{G}$ may be called {\it homogeneous} (or {\it affinely homogeneous}),
if it admits a smooth transitive Lie group action which preserves $\mathbf{G}$. Here the spray preserving property means that  the lifting (or tangent map) $\widetilde{\rho}=\rho_*:TM\rightarrow TM$ for the diffeomorphism $\rho$ on $M$ satisfies $(\widetilde{\rho})_*\mathbf{G}=\mathbf{G}$, or equivalently, both $\rho$ and $\rho^{-1}$ maps geodesics to geodesics. In the Lie algebraic level, the spray preserving property
for a flow $\rho_t$ of diffeomorphisms on $M$ generated by a   tangent  field $V$ is then encoded in $[\widetilde{V},\mathbf{G}]=0$, where $\widetilde{V}$ is the lifting of $V$ on $TM\backslash0$ (See Section \ref{section-2-1} or \cite{XD2015} for more details on lifting tangent fields).

From the view point of homogeneous geometry, left invariant geometry
on a Lie group is an important model which deserves to be singled out for study \cite{DZ1979,Mi1976,YD2017}. On a Lie group $G$, we have globally defined frames consisting of left and right invariant tangent vector fields respectively. We further expand them to the left invariant frame $\{\widetilde{U}_i,\partial_{u^i},\forall i\}$ and right invariant frame
$\{\widetilde{V}_i,\partial_{v^i},\forall i\}$ on $TG$. Take  $\{\widetilde{U}_i,\partial_{u^i},\forall i\}$ for example. Here $U_i$'s are left invariant
tangent vector fields on $G$  and $\widetilde{U}_i$'s are their liftings.
At each $g\in G$, the values $\{U_i(g),\forall i\}$ provide a basis for $T_gG$.
The vector fields
$\partial_{u^i}$'s are tangent to each $T_gG$, where they correspond to the linear coordinates for $y=u^iU_i(g)\in T_gG$.

We show in this paper that, using these invariant frames, the geometry for a left invariant spray structure can be explicitly and globally described. Firstly, we prove the existence of the most special bi-invariant spray structure on each Lie group, i.e., (see its proof in Section \ref{section-3-2})\bigskip

\noindent
{\bf Theorem A}\quad {\it The vector field
$\mathbf{G}_{0}=u^i\widetilde{U}_i=v^i\widetilde{V}_i$ on $TG\backslash0$
is a bi-invariant Berwald spray structure on the Lie group $G$.}
\bigskip

Then we use $\mathbf{G}_0$ as the reference to present any other left invariant spray structure $\mathbf{G}$, i.e., $\mathbf{G}=\mathbf{G}_0-\mathbf{H}$. Here  $\mathbf{H}=\mathbf{H}^i\partial_{u^i}$ is a left invariant smooth vector field on $TG\backslash0$, where the left invariancy is with respect to the action of $\widetilde{L}_g$ for
every $g\in G$.

Collecting and combining the data from $\mathbf{H}$, from the left invariant frame, and from the Lie bracket coefficients $c_{ij}^k$ for $\mathfrak{g}=\mathrm{Lie}(G)$,
we find global presentations for  curvatures of $(G,\mathbf{G})$.
For the S-curvature, we have (see Section \ref{section-3-3} for its proof)\bigskip

\noindent
{\bf Theorem B}\quad
{\it For the left invariant spray structure
$\mathbf{G}=\mathbf{G}_0-\mathbf{H}^i\partial_{u^i}$
and any left invariant smooth measure on $G$,
its S-curvature can be presented as
$
S=\tfrac12 \tfrac{\partial}{\partial u^i}\mathbf{H}^i
+\tfrac12 c_{lj}^j u^l$.}\bigskip

\noindent
For the Riemann curvature, we have (see Section \ref{section-3-4} for its proof)\bigskip

\noindent
{\bf Theorem C}\quad
{\it For the left invariant spray structure
$\mathbf{G}=\mathbf{G}_0-\mathbf{H}^i\partial_{u^i}$,
the Riemann curvature satisfies
\begin{eqnarray*}
R(\widetilde{U}^\mathcal{H}_q)&=&\tfrac34 c^r_{pq}u^p
\tfrac{\partial}{\partial u^r}\mathbf{H}^i\partial_{u^i}+
\tfrac12c^i_{qj}\mathbf{H}^j\partial_{u^i}+
\tfrac12\mathbf{H}^p\tfrac{\partial^2}{\partial u^p\partial u^q}
\mathbf{H}^i\partial_{u^i}\nonumber\\& &-
\tfrac14\tfrac{\partial}{\partial u^q}
\mathbf{H}^p\tfrac{\partial}{\partial u^p}\mathbf{H}^i\partial_{u^i}+
\tfrac14 c^i_{pr}u^r\tfrac{\partial}{\partial u^q}\mathbf{H}^p
\partial_{u^i}-
\tfrac14 c^p_{qj}c^i_{pr}u^ju^r\partial_{u^i},
\end{eqnarray*}
where $\widetilde{U}_q^\mathcal{H}$ is the horizonal lifting of $U_q$}.\bigskip

\noindent

On the other hand, the algebraization method tells us, we can always reduce a problem in homogeneous geometry to that in a tangent space.
For the left invariant spray structure $\mathbf{G}=\mathbf{G}_0-\mathbf{H}$,  the restriction $\mathbf{H}|_{T_eG\backslash\{0\}}$ plays an important role.
It is more convenient to present it as the {\it spray vector field} (this terminology is from L. Huang \cite{Hu2015-1}), i.e., the map $\eta:\mathfrak{g}\backslash\{0\}\rightarrow\mathfrak{g}$ determined by
$\eta(y)=\mathbf{H}^i(e,y) e_i$  with $y=u^ie_i=u^i U_i(e)\in\mathfrak{g}$.
Then Theorem B and Theorem C can be easily translated to
Corollary \ref{cor-4}, which generalizes the left invariant curvature formulae L. Huang found in
homogeneous Finsler geometry \cite{Hu2015-1,Hu2015-2,Hu2017}.

The importance of the spray vector field $\eta$ is further revealed by
the correspondence between geodesics for $\mathbf{G}$ and integral curves of $-\eta$. To be precise, we prove the following theorem in Section \ref{section-4-2}.\bigskip

\noindent
{\bf Theorem D}\quad
{\it
Let $\mathbf{G}$ be the left invariant spray structure on the Lie group $G$ with the associated spray vector field $\eta$. Then for any open interval $(a,b)\subset\mathbb{R}$ containing 0, there is a
one-to-one correspondence between the following two sets:
\begin{enumerate}
\item The set of all $c(t)$ with $t\in(a,b)$ and $c(0)=e$, which are geodesics for $\mathbf{G}$;
\item The set of all $y(t)$ with $t\in(a,b)$, which are integral curves of $-\eta$.
\end{enumerate}
The correspondence between these two sets is given by $y(t)=(L_{c(t)^{-1}})_*(\dot{c}(t))$.
}\bigskip

There is an interesting application. Recently,
B. Najafi and A. Tayebi proved Landsberg Conjecture \cite{Ma1996}
(see \cite{XM2020} for more references and some recent progress)
in the case of homogeneous Finsler surfaces. We can use Theorem D to
propose a totally
different proof for their theorem in \cite{NT2021} (see Theorem \ref{thm-4} and its proof).

Finally, we remark that the geometry of a left invariant spray structure
has many similarities as its analog in Finsler geometry, but there are also many significant differences. We believe that many results for left invariant spray structure can be generalized to homogeneous spray structures on other smooth coset spaces, however they may lose their globality after the generalization, and the proofs would require a more delicate usage of special local frame.

This paper is organized as following. In Section 2, we recall the notion for lifting a vector field and basic knowledge on spray geometry. In Section 3, we introduce invariant spray structures on a Lie group and prove the left invariant formulae for S-curvature and Riemann curvature. In Section 4, we discuss the spray vector field $\eta$ for a left invariant spray structure $\mathbf{G}$, prove the correspondence between geodesics for $\mathbf{G}$ and integral curves of $-\eta$,  and provide an alternative proof for B. Najafi and A. Tayebi's theorem.

\section{Preliminaries}
\subsection{Lifting of a tangent field}
\label{section-2-1}
 Throughout this paper, the smoothness for manifolds, fields, measures, and curves, etc, and the connectedness for Lie groups, are always assumed.

A tangent field $V$ on the manifold $M$ can be lifted to tangent field $\widetilde{V}$ on $TM$. Locally around any $x\in M$,
$V$ generates a flow of local diffeomorphisms $\rho_t$ on $M$. We can first lift $\rho_t$ to $\widetilde{\rho}_t=(\rho_t)_*$ on $TM$, and then define the {\it lifting} (or {\it complete lifting}) of $V$ by $\widetilde{V}=\tfrac{d}{dt}|_{t=0}\widetilde{\rho}_t$.

We may use {\it standard local coordinates} $(x^i,y^i)$, i.e., $x=(x^i)\in M$ and $y=y^i{\partial}_{x^i}$, to present $\widetilde{V}$, i.e., (see Lemma 3.2 in \cite{XD2015})
\begin{lemma}\label{lemma-1}
The tangent field $V(x)=a^i(x) \partial_{x^i}$ has the lifting
\begin{equation}\label{001}
\widetilde{V}=a^i(x)  \partial_{x^i}+
y^j\tfrac{\partial}{\partial x^j}a^i(x)  \partial_{y^i}
\end{equation}
by standard local coordinates.
\end{lemma}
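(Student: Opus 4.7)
The plan is to unpack the definition $\widetilde{V}=\tfrac{d}{dt}|_{t=0}\widetilde{\rho}_t$ by writing the tangent map $(\rho_t)_*$ explicitly in the standard local coordinates $(x^i,y^i)$ and then differentiating at $t=0$. Each of the two summands in the claimed formula will come from one factor of the coordinate presentation of $\widetilde{\rho}_t$: the base map gives the horizontal term $a^i(x)\partial_{x^i}$, and the Jacobian acting on the fiber gives the vertical term $y^j(\tfrac{\partial}{\partial x^j}a^i)\partial_{y^i}$.

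First I would fix $x\in M$, pick local coordinates $(x^i)$ around it, and write the local flow of $V$ as $\rho_t(x)=(\rho_t^1(x),\dots,\rho_t^n(x))$. By the very definition of the flow one has $\rho_0(x)=x$ and $\tfrac{d}{dt}|_{t=0}\rho_t^i(x)=a^i(x)$, which gives the Taylor expansion $\rho_t^i(x)=x^i+t\,a^i(x)+O(t^2)$. Differentiating the base coordinate at $t=0$ immediately produces the horizontal term $a^i(x)\partial_{x^i}$.

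Next I would describe the induced action on fibers. For a tangent vector $y=y^j\partial_{x^j}|_x\in T_xM$, the push-forward $(\rho_t)_*y\in T_{\rho_t(x)}M$ has standard fiber components $\tilde{y}^i=\tfrac{\partial \rho_t^i}{\partial x^j}(x)\,y^j$, by the usual chain-rule formula for the differential of a diffeomorphism. Since $\rho_t^i(x)$ depends jointly smoothly on $(t,x)$, I can interchange $\tfrac{d}{dt}|_{t=0}$ with $\tfrac{\partial}{\partial x^j}$; combined with the Taylor expansion this gives $\tfrac{d}{dt}|_{t=0}\tfrac{\partial \rho_t^i}{\partial x^j}(x)=\tfrac{\partial a^i}{\partial x^j}(x)$. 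Multiplying by the (fixed) fiber coordinate $y^j$ yields the vertical term $y^j\tfrac{\partial a^i}{\partial x^j}\partial_{y^i}$, and summing the two contributions reproduces the stated expression for $\widetilde{V}$.

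The only real obstacle is bookkeeping: keeping straight which index belongs to base and which to fiber, and remembering that the $y^j$ appearing in the vertical part is the original fiber coordinate at $(x,y)$ rather than a transformed quantity $\tilde{y}^j$, because $\widetilde{V}$ is evaluated at the point $(x,y)\in TM$ itself, not at $\widetilde{\rho}_t(x,y)$. Once this convention is fixed the assembly is immediate and Lemma~\ref{lemma-1} follows.
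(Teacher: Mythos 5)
Your computation is correct: expanding the flow as $\rho_t^i(x)=x^i+t\,a^i(x)+O(t^2)$, writing $\widetilde{\rho}_t(x,y)=\bigl(\rho_t(x),\tfrac{\partial\rho_t^i}{\partial x^j}(x)\,y^j\bigr)$, and differentiating at $t=0$ (with the legitimate interchange of $\tfrac{d}{dt}$ and $\tfrac{\partial}{\partial x^j}$ by joint smoothness of the flow) is exactly the standard derivation of the complete lift, and your remark that the vertical coefficient is evaluated at the original fiber coordinate $y^j$ is the right bookkeeping. The paper itself gives no argument for this lemma, only a citation to Lemma 3.2 of \cite{XD2015}, and your proof is precisely the expected one behind that reference.
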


Notice that in the local frame $\{\partial_{x^i},\partial_{y^i},\forall i\}$ corresponding to the standard local coordinates $(x^i,y^i)$, each $\partial_{x^i}$ on $TM$ coincides with the lifting of $\partial_{x^i}$ on $M$, and each $\partial_{y^i}$ is tangent to every tangent space.
So Lemma \ref{lemma-1} implies the following transfer formulae for  changing the standard local coordinates around a point,
\begin{eqnarray*}
& &\partial_{\bar{x}^i}=\tfrac{\partial{x}^j}{\partial \bar{x}^i}  \partial_{{x}^j}+
{y}^j\tfrac{\partial}{\partial x^j}(\tfrac{\partial x^k}{\partial\bar{x}^i})  \partial_{y^k}=
\tfrac{\partial{x}^j}{\partial \bar{x}^i}  \partial_{{x}^j}+
\bar{y}^j\tfrac{\partial^2 x^k}{\partial\bar{x}^i\partial\bar{x}^j}  \partial_{y^k}\label{002},\\
& &\bar{y}^i=y^j\tfrac{\partial \bar{x}^i}{\partial x^j} \quad\mbox{and}\quad
{\partial_{\bar{y}^i}}=\tfrac{\partial{x}^j}{\partial \bar{x}^i} \partial_{y^j}.\label{003}
\end{eqnarray*}
Though geometric notions are sometimes introduced by standard local coordinates, we can use above transfer formulae to verify they are well defined.

A {\it local frame} is
referred to a set of tangent fields $\{U_i,\forall i\}$ on some open subset $\mathcal{U}$
of $M$, such that
at each $x\in\mathcal{U}$, $\{U_i(x),\forall i\}$ is a basis for $T_xM$. Associated with $\{U_i,\forall i\}$, there are functions $u^i$'s on $T\mathcal{U}$ determined by $y=u^i  U_i(x)$ for every $y\in T_xM\subset T\mathcal{U}$. We denote $\partial_{u^i}$'s
the tangent fields which are tangent to and correspond to the linear $u^i$-coordinates in each $T_xM\subset T\mathcal{U}$. Using the local frame $\{\widetilde{U}_i,\partial_{u^i},\forall i\}$ on $T\mathcal{U}$, Lemma \ref{lemma-1} can be slightly generalized
as following.

\begin{lemma}\label{lemma-3}
Let $V(x)=a^i(x)  U_i$ be the local frame presentation for a tangent field on $M$.
Then its lifting can be locally presented as
\begin{equation*}
\widetilde{V}=a^i \widetilde{U}_i+u^j U_j a^i   \partial_{u^i}.
\end{equation*}
\end{lemma}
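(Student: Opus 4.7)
The plan is to reduce Lemma \ref{lemma-3} to Lemma \ref{lemma-1} by expanding everything in a fixed system of standard local coordinates $(x^i,y^i)$ and then translating the result into the local frame $\{\widetilde{U}_i,\partial_{u^i}\}$. Write $U_i=U_i^j\partial_{x^j}$ with smooth coefficients $U_i^j(x)$, and let $(\widetilde U^{-1})_j^i$ denote the inverse matrix. Since $y=y^j\partial_{x^j}=u^iU_i(x)$ we have $y^j=u^iU_i^j(x)$, so at each $x\in\mathcal U$ the fiber coordinate changes linearly and hence $\partial_{u^i}=U_i^j\partial_{y^j}$ (equivalently, $\partial_{y^j}=(\widetilde U^{-1})_j^i\partial_{u^i}$). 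This identification is the bridge between the two local frames on $T\mathcal U$.

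Next I would apply Lemma \ref{lemma-1} twice. On the one hand, $V=a^iU_i^j\partial_{x^j}$, so
\begin{equation*}
\widetilde V=a^iU_i^j\partial_{x^j}+y^k\tfrac{\partial}{\partial x^k}(a^iU_i^j)\,\partial_{y^j}.
\end{equation*}
On the other hand, each $\widetilde U_i=U_i^j\partial_{x^j}+y^k(\partial_{x^k}U_i^j)\partial_{y^j}$, so that
\begin{equation*}
a^i\widetilde U_i=a^iU_i^j\partial_{x^j}+a^iy^k(\partial_{x^k}U_i^j)\,\partial_{y^j}.
\end{equation*}
Subtracting and applying the Leibniz rule to $\partial_{x^k}(a^iU_i^j)$ cancels the $\partial_{x^j}$-terms and all $U_i^j$-derivative contributions, leaving
\begin{equation*}
\widetilde V-a^i\widetilde U_i=y^k(\partial_{x^k}a^i)\,U_i^j\,\partial_{y^j}.
\end{equation*}

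Finally, I would rewrite this residue in the $\{\widetilde U_i,\partial_{u^i}\}$-frame. Substituting $\partial_{y^j}=(\widetilde U^{-1})_j^l\partial_{u^l}$ collapses $U_i^j(\widetilde U^{-1})_j^l=\delta_i^l$, giving $y^k(\partial_{x^k}a^l)\partial_{u^l}$. Then replacing $y^k=u^jU_j^k$ and recognizing $U_j^k\partial_{x^k}a^i=U_ja^i$ yields the asserted formula $u^j(U_ja^i)\partial_{u^i}$. The whole argument is a bookkeeping exercise, and the only real hazard is keeping the indices honest during the passage between the two frames; the key reason the calculation closes is the cancellation of all derivatives of $U_i^j$ when $a^i\widetilde U_i$ is subtracted from $\widetilde V$, which is what forces the final expression to depend only on derivatives of the $a^i$.
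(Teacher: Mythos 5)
Your proposal is correct and follows essentially the same route as the paper: expand $V$ in standard local coordinates, apply Lemma \ref{lemma-1}, split off $a^i\widetilde{U}_i$ by the Leibniz rule so the derivatives of the frame coefficients cancel, and identify the remaining term via $\partial_{u^i}=A_i^j\partial_{y^j}$ and $y^j=u^iA_i^j$ (your $U_i^j$ is the paper's $A_i^j$). The only difference is cosmetic bookkeeping, such as inserting the inverse matrix to convert $\partial_{y^j}$ rather than quoting the identity $\partial_{u^i}=A_i^j\partial_{y^j}$ directly.
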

\begin{proof}
Using standard local coordinates $(x^i,y^i)$, we have the presentations
\begin{eqnarray}\label{006}
U_i=A_i^j \partial_{x^j},\quad  u^i=y^j B^i_j,\quad\mbox{and}\quad
\partial_{u^i}=A^j_i \partial_{y^j},
\end{eqnarray}
where $(A_i^j)=(A_i^j(x))$ and $(B_i^j)=(B_i^j(x))=(A_i^j(x))^{-1}$ (i.e. $A_i^j B_j^k=B_i^jA_j^k=\delta_i^k$) are matrix valued functions which only depend on the $x^i$-coordinates. By Lemma \ref{lemma-1},
the lifting of $V=a^i  U_i=a^iA_i^j \partial_{x^j}$ is
\begin{eqnarray*}
\widetilde{V}&=&a^i A_i^j  \partial_{x^j}+y^j\tfrac{\partial}{\partial x^j}
(a^iA_i^k)  \partial_{y^k}\\
&=& a^i(A_i^j  \partial_{x^j}+y^j\tfrac{\partial}{\partial x^j}A_i^k  \partial_{y^k})+ y^j A_i^k\tfrac{\partial}{\partial x^j}a^i  \partial_{y^k}\\
&=&a^i \widetilde{U}_i+ u^j U_j a^i  \partial_{u^i},
\end{eqnarray*}which ends the proof.
\end{proof}

By straight forward calculation using Lemma \ref{lemma-1}, we see

\begin{lemma} \label{lemma-2}
(1) For the diffeomorphism $\phi$, tangent fields $V$ and
$\phi_*V$ on $M$, their liftings satisfy $\widetilde{\phi}_*\widetilde{V}=\widetilde{\phi_*V}$. So if $\phi$
preserves $V$, i.e., $\phi_*V=V$, then $\widetilde{\phi}$ preserves $\widetilde{V}$ as well.

(2) For the liftings $\widetilde{U}$, $\widetilde{V}$ and $\widetilde{[U,V]}$ of the tangent fields $U$, $V$ and $[U,V]$ on $M$, we have $\widetilde{[U,V]}=[\widetilde{U},\widetilde{V}]$. In particular,
$\widetilde{U}$ and $\widetilde{V}$ commute when $U$ and $V$ do.
\end{lemma}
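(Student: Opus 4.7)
The plan is to reduce both parts to the flow characterization of the lifting, exploiting the fact that the tangent map $\widetilde{\phi}=\phi_*$ is a covariant functor on manifolds.

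For part (1), let $\rho_t$ be the local flow of $V$. A standard calculation shows that the local flow of $\phi_*V$ is $\sigma_t=\phi\circ\rho_t\circ\phi^{-1}$. By the functoriality of the tangent map,
\[
\widetilde{\sigma}_t=(\sigma_t)_*=\widetilde{\phi}\circ\widetilde{\rho}_t\circ\widetilde{\phi}^{-1}.
\]
Differentiating at $t=0$ and using that, by definition, $\widetilde{W}=\tfrac{d}{dt}|_{t=0}\widetilde{\mu}_t$ for the flow $\mu_t$ of any tangent field $W$, we obtain
$\widetilde{\phi_*V}=\widetilde{\phi}_*\widetilde{V}$. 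The preservation claim is the special case $\phi_*V=V$.

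For part (2), I would invoke the Lie-derivative characterization $[U,V]=\tfrac{d}{dt}|_{t=0}(\rho_{-t})_*V$, where $\rho_t$ is the flow of $U$. Applying part (1) with $\phi=\rho_{-t}$ gives $(\widetilde{\rho}_{-t})_*\widetilde{V}=\widetilde{(\rho_{-t})_*V}$. Since the flow of $\widetilde{U}$ on $TM$ is precisely $\widetilde{\rho}_t$,
\[
[\widetilde{U},\widetilde{V}]=\tfrac{d}{dt}\Big|_{t=0}(\widetilde{\rho}_{-t})_*\widetilde{V}=\tfrac{d}{dt}\Big|_{t=0}\widetilde{(\rho_{-t})_*V}=\widetilde{[U,V]},
\]
where the last equality uses that the lifting operation commutes with the $t$-derivative. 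This interchange is transparent from Lemma \ref{lemma-1}: in standard local coordinates, lifting only involves the coefficients $a^i(x)$ and their first $x$-derivatives, which depend smoothly and linearly on the family $(\rho_{-t})_*V$. The consequence on commuting fields is immediate, since $[U,V]=0$ forces $\widetilde{[U,V]}=\widetilde{0}=0$.

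The only (minor) obstacle is justifying the interchange of lifting with the parameter derivative in the last step. Readers preferring to bypass it can run the purely coordinate-based alternative hinted at by the author: writing $U=a^i\partial_{x^i}$ and $V=b^j\partial_{x^j}$, expand both $[\widetilde{U},\widetilde{V}]$ and $\widetilde{[U,V]}$ via Lemma \ref{lemma-1}, and collect terms. The $\partial_{x^i}$-components on both sides reproduce the classical bracket $[U,V]$ on $M$; the $\partial_{y^i}$-components involve the second derivatives $\partial_{x^j}\partial_{x^k}a^i$ and $\partial_{x^j}\partial_{x^k}b^i$, which cancel by symmetry in $j,k$ after applying the product rule, leaving exactly the first-derivative expression demanded by Lemma \ref{lemma-1} applied to $[U,V]$. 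This is the ``straightforward calculation'' the author refers to.
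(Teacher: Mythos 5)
Your proof is correct, but it takes the route the paper only gestures at, rather than the one it officially invokes. The paper's justification of Lemma \ref{lemma-2} is the ``straightforward calculation'' with Lemma \ref{lemma-1}: write $U=a^i\partial_{x^i}$, $V=b^i\partial_{x^i}$, expand $[\widetilde U,\widetilde V]$ and $\widetilde{[U,V]}$ in standard local coordinates and compare; the functorial picture (lifting as a homomorphism of diffeomorphism groups, resp.\ of Lie algebras of vector fields) appears there only as an informal remark after the statement. You promote that remark to the actual proof: part (1) by conjugating flows, $\sigma_t=\phi\circ\rho_t\circ\phi^{-1}$, and using functoriality of the tangent map; part (2) by the Lie-derivative formula $[U,V]=\tfrac{d}{dt}|_{t=0}(\rho_{-t})_*V$, the fact that $\widetilde\rho_t$ is the flow of $\widetilde U$ (valid because $(\rho_{t+s})_*=(\rho_t)_*(\rho_s)_*$, so $\widetilde\rho_t$ is a one-parameter group with generator $\widetilde U$ by the definition in Section \ref{section-2-1}), and the interchange of lifting with $\partial_t$, which you correctly justify from the coordinate formula of Lemma \ref{lemma-1}. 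This is a legitimate and more conceptual argument; it avoids second-derivative bookkeeping, whereas the paper's coordinate computation is shorter and stays within the chart conventions already set up. One small caveat about your sketch of the coordinate alternative: the second derivatives $\partial_{x^j}\partial_{x^k}a^i$ and $\partial_{x^j}\partial_{x^k}b^i$ do not drop out of either side --- Lemma \ref{lemma-1} applied to $[U,V]$ produces them as well, since the coefficients of $[U,V]$ already contain first derivatives --- they merely agree on the two sides (by symmetry of mixed partials) and hence cancel only in the difference $[\widetilde U,\widetilde V]-\widetilde{[U,V]}$; the accurate statement is that both $\partial_{y^i}$-components equal $y^j\partial_{x^j}\bigl(a^k\partial_{x^k}b^i-b^k\partial_{x^k}a^i\bigr)$.
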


Since lifting diffeomorphisms from $M$ to $TM$ might be viewed as
a Lie group homomorphism, and lifting tangent vector fields the corresponding Lie algebra homomorphism, Lemma \ref{lemma-2} can be naturally observed.

\subsection{Spray structure and geodesic}
\label{section-2-2}
Here we collection some basic knowledge on the spray geometry from
\cite{Sh2001}, which are required in later discussion.

Let $\mathbf{G}$ be a tangent field on the slit tangent bundle
$TM\backslash0$ for the manifold $M$. We call it a {\it spray structure} on $M$, if for every standard local coordinates $(x^i,y^i)$, $\mathbf{G}$ has a presentation of the form
$$\mathbf{G}=y^i \partial_{x^i}-2\mathbf{G}^i \partial_{y^i},$$
where each $\mathbf{G}^i=\mathbf{G}^i(x,y)$ has the degree-two positive homogeneity with respect to the $y^i$-coordinates \cite{Sh2001}. Further more, if all $\mathbf{G}^i(x,\cdot)$'s are  quadratic for the $y$-coordinates, we call $\mathbf{G}$ a {\it Berwald spray structure}.

With respect to a measure which is locally presented
as $d\mu=\sigma(x)dx^1\wedge\cdots\wedge dx^n$, where $\sigma(x)$
is nonvanishing everywhere in the local chart, the S-curvature \cite{Sh1997}
can be determined by its local presentation
\begin{equation}\label{118}
S(x)=\tfrac{\partial}{\partial y^i}\mathbf{G}^i(x,y)-\sigma(x)^{-1}y^m
\tfrac{\partial}{\partial x^m}\sigma(x).
\end{equation}

The spray structure one-to-one determines the set of all the geodesics on $M$ (with required parametrization). A curve $c(t)$ on $(M,\mathbf{G})$ is called a {\it geodesic} if its tangent field $\dot{c}(t)$ is nonzero everywhere and its lifting $(c(t),\dot{c}(t))$ is an integral curve of  $\mathbf{G}$. By standard local coordinates, a geodesic $c(t)=(c^i(t))$ is determined by the ODEs
$$\ddot{c}^i(t)+2\mathbf{G}^i(c(t),\dot{c}(t))=0,\quad\forall i.$$
We say $\mathbf{G}$ is {\it geodesically complete}, if every maximally extended geodesic $c(t)$ corresponds to $t\in(-\infty,\infty)$.

\subsection{Riemann curvature for a spray structure}
\label{section-2-3}
For the spray structure $\mathbf{G}$ on $M$ and any standard local coordinates $(x^i,y^i)$, we denote $N^i_j=\tfrac{\partial {\mathbf{G}}^i}{\partial y^j}$ and
$\delta_{x^i}=\partial_{x^i}-N^j_i\partial_{y^j}$. Then the tangent bundle of $TM\backslash0$ is the direct sum
of two subbundles. One is the {\it horizonal distribution}
$\mathcal{H}$ linearly spanned by all
${\delta_{x^i}}$'s at each point. The other is
the {\it vertical distribution} $\mathcal{V}$ linearly spanned by all ${\partial_{ y^i}}$'s at each point. For a tangent field $V=a^i(x)\partial_{x^i}$, we call  $\widetilde{V}^\mathcal{H}=a^i(x)\delta_{x^i}$ the {\it horizonal lifting} of $V$.

For any nonzero $y\in T_xM$, the Riemannian curvature $\mathbf{R}_y:T_xM\rightarrow T_xM$
can be presented as the linear map $\mathbf{R}_y(a^k\partial_{x^k})=a^kR^i_k\partial_{x^i}$, in which
the coefficients $R_k^i$'s are determined by
\begin{equation}\label{004}
[\mathbf{G},\delta_{x^k}]\equiv R^i_k\partial_{y^i}\quad\mbox{(mod }
\mathcal{H}\mbox{)}.
\end{equation}
Using all $R^i_k$'s in (\ref{004}), the Riemann curvature can also be interpreted as a tensor field $R=R^i_k d x^k\otimes {\partial}_{y^i}$ for $\mathcal{H}^*\otimes\mathcal{V}$ over $TM\backslash0$.
\section{Invariant spray structure on a Lie group}
\subsection{Notations for invariant frame}
\label{section-3-1}
Let $G$ be a  Lie group. We denote $L_g(g')=gg'$ and $R_g(g')=g'g$ its left and right translations. Let $\mathfrak{g}=T_eG$ be the Lie algebra of $G$, for which
we fix a basis $\{e_1,\cdots,e_n\}$, and denote $c_{ij}^k$ the corresponding bracket coefficients in $[e_i, e_j]=c_{ij}^k e_k$.

Each $e_i$ determines a left invariant tangent field $U_i(g)=(L_g)_*(e_i)$. Any tangent vector $y\in T_gG$ can be uniquely written as $y=u^iU_i(g)$.
Let $\partial_{u^i}$'s be the sections of $\mathcal{V}$ which correspond to the $u^i$-coordinates in each $T_gG$. We will simply call $\{\widetilde{U}_i,\partial_{u^i},\forall i\}$ the {\it left invariant frame on $TG$}.

Similarly, we also have the right invariant tangent fields
$V_i(g)=(R_g)_*(e_i)$, the functions $v^i$ on $TG$ determined by
$y=v^iV_i(g)$ for every $y\in T_gG$, and the {\it right invariant frame $\{\widetilde{V}_i,\partial_{v^i},\forall i\}$
on $TG$}. By their invariancy, we have the following
obvious facts for every $i$ and $j$,
\begin{eqnarray}
& &[U_i,U_j]= c_{ij}^kU_k, \quad [V_i,V_j]= -c_{ij}^k V_k,\quad
[U_i,V_j]=0,\nonumber\\
& &[\widetilde{U}_i,\widetilde{U}_j]= c_{ij}^k\widetilde{U}_k, \quad [\widetilde{V}_i,\widetilde{V}_j]= -c_{ij}^k V_k,\quad
[\widetilde{U}_i,\widetilde{V}_j]=0,\nonumber\\
& &\widetilde{U}_i v^j=0,\quad [\widetilde{U}_i,\partial_{v^j}]=0,
\quad\widetilde{V}_i u^j=0,\quad [\widetilde{V}_i,\partial_{u^j}]=0.
\label{100}
\end{eqnarray}

Denote $\phi_i^j$ and $\psi_i^j$ the functions on $G$ such that
$\mathrm{Ad}(g)e_i=\phi_i^j e_j$, $\mathrm{Ad}(g^{-1})e_i=\psi_i^j e_j$ (so we have $(\psi_i^j)=(\phi_i^j)^{-1}$, i.e., $\psi_i^j\phi_j^k=\phi_i^j\psi_j^k=\delta_i^k$). At each $g\in G$,
\begin{eqnarray*}
U_i(g)=(L_g)_* (e_i)=(R_{g})_*(R_{g^{-1}})_*(L_{g})_*( e_i)
=(R_{g})_*(\mathrm{Ad}(g)e_i)=\phi_i^j (R_{g})_* (e_j)=
\phi_i^j V_j(g).
\end{eqnarray*}
So we have
\begin{eqnarray}\label{101}
U_i=\phi_i^j  V_j,\quad
u^i=\psi^i_j v^j,\quad \partial_{u^i}=\phi_i^j \partial_{v^j}.
\end{eqnarray}

In later discussion, we need the following results.
\begin{lemma}\label{lemma-5}
Keep above notations, then at each point of $G$, we have the following:
\begin{eqnarray*}
(1)\ \phi_l^j V_j\phi_i^k=c^j_{li}\phi_j^k,\quad
(2)\ \widetilde{U}_i=\phi_i^j\widetilde{V}_j+
c^q_{pi}u^p\partial_{u^q},\quad
(3)\ \widetilde{U}_i u^j=c^j_{li}u^l,\quad
(4)\ [\widetilde{U}_i,{\partial_{u^l}}]
=c^p_{il}\partial_{u^p}.
\end{eqnarray*}
\end{lemma}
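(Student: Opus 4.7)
The strategy is to establish (1) first as a pointwise identity on $G$, then deduce (2) by lifting the frame relation $U_i=\phi_i^jV_j$ via Lemma~\ref{lemma-3}, and finally derive (3) and (4) as quick consequences of (2) together with the identities collected in~(\ref{100}).

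For (1), I would compute $U_l\phi_i^k$ directly. Because $U_l$ is left invariant with $U_l(e)=e_l$, its flow through $g\in G$ is $t\mapsto g\exp(te_l)$. Combining the defining relation $\mathrm{Ad}(g)e_i=\phi_i^k(g)e_k$ with the factorization $\mathrm{Ad}(g\exp(te_l))=\mathrm{Ad}(g)\exp(t\,\mathrm{ad}(e_l))$ and expanding to first order in $t$ yields $U_l\phi_i^k=c_{li}^j\phi_j^k$. Substituting $U_l=\phi_l^jV_j$ from (\ref{101}) then rewrites the left-hand side as $\phi_l^jV_j\phi_i^k$, which is (1).

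For (2), I apply Lemma~\ref{lemma-3} with the right invariant frame $\{V_j\}$ in place of $\{U_j\}$: since $U_i=\phi_i^jV_j$, the formula there gives
\[
\widetilde{U}_i=\phi_i^j\widetilde{V}_j+v^l(V_l\phi_i^j)\,\partial_{v^j}.
\]
The remaining task is to push the second summand over to the left invariant frame using (\ref{101}), namely $v^l=\phi_k^lu^k$ and $\partial_{v^j}=\psi_j^m\partial_{u^m}$, while evaluating $V_l\phi_i^j$ by part~(1). The factors $\phi_k^l\cdots\psi_j^m$ telescope through $\phi_q^j\psi_j^m=\delta_q^m$, leaving exactly $c_{pi}^qu^p\,\partial_{u^q}$, which is the claim.

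Parts (3) and (4) now follow from (2). Applying both sides of (2) to $u^j$: the $\widetilde{V}_k$ contribution vanishes by $\widetilde{V}_ku^j=0$ from (\ref{100}), and $\partial_{u^q}u^j=\delta_q^j$ extracts $c_{li}^ju^l$, giving (3). Bracketing both sides with $\partial_{u^l}$: the term $[\phi_i^j\widetilde{V}_j,\partial_{u^l}]$ vanishes because $\phi_i^j$ is a function on $G$ (hence killed by $\partial_{u^l}$) together with $[\widetilde{V}_j,\partial_{u^l}]=0$ from (\ref{100}), while $[c_{pi}^qu^p\partial_{u^q},\partial_{u^l}]=-c_{li}^q\partial_{u^q}=c_{il}^q\partial_{u^q}$ by antisymmetry of the structure constants, proving (4). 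No substantial obstacle is expected; the only care required is the index bookkeeping through the $\phi\psi$-contractions in step~(2).
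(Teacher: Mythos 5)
Your proposal is correct and follows essentially the same route as the paper: (2) is obtained from Lemma~\ref{lemma-3} applied to $U_i=\phi_i^jV_j$ together with (1) and the transfer relations (\ref{101}), and (3), (4) then follow from (2) via the identities in (\ref{100}), exactly as in the text. The only cosmetic difference is in (1), where you differentiate $\phi_i^k$ along the flow $g\exp(te_l)$ of $U_l$ and then substitute $U_l=\phi_l^jV_j$, while the paper differentiates along $V_j$ first and contracts with $\phi_l^j$ afterwards; both are the same one-line $\mathrm{Ad}$-equivariance computation.
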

\begin{proof}
(1) At each $g\in G$, we have $\mathrm{Ad}(g)e_i=\phi_i^k  e_k$
and then
\begin{eqnarray*}
V_j\phi_i^k  e_k&=& V_j(\phi_i^k  e_k) =\tfrac{d}{dt}
(\mathrm{Ad}(\exp te_j  \cdot g)  e_i)\\
&=&\tfrac{d}{dt}\mathrm{Ad}(\exp te_j)(\mathrm{Ad}(g)e_i)
=[e_j,\mathrm{Ad}(g)e_i].
\end{eqnarray*}
So we get
\begin{eqnarray}\label{007}
\phi_l^jV_j\phi_i^k  e_k=[\mathrm{Ad}(g)e_l,\mathrm{Ad}(g)e_i]
=\mathrm{Ad}(g)[e_l,e_i]=c_{li}^j\mathrm{Ad}(g)e_j=c_{li}^j\phi_j^k e_k.
\end{eqnarray}
Comparing the coefficients of $e_k$ in both sides of (\ref{007}), then (1) is proved.

(2) Using Lemma \ref{lemma-3} for $U_i=\phi_i^j V_j$, (1) of Lemma \ref{lemma-5} and (\ref{101}), we get
\begin{eqnarray*}
\widetilde{U}_i&=&\phi_i^j\widetilde{V}_j+
v^j V_j\phi_i^k\partial_{v^k}=
\phi_i^j\widetilde{V}_j+u^p\phi_p^j V_j\phi_i^k\partial_{v^k}\\
&=&\phi_i^j\widetilde{V}_j+u^p c^q_{pi}
\phi_q^k\partial_{v^k}=\phi_i^j\widetilde{V}_j+ c^q_{pi}u^p \partial_{u^q},
\end{eqnarray*}
which proves (2).

(3) By (2) of Lemma \ref{lemma-5}, we have
\begin{eqnarray*}
\widetilde{U}_iu^j=\phi_i^k\widetilde{V}_k u^j
+ c^q_{pi}u^p\tfrac{\partial}{\partial u^q}u^j
=c^j_{li}u^l,
\end{eqnarray*}
which proves (3).

(4) Using (2) of Lemma \ref{lemma-5} again, we get
\begin{eqnarray*}
[\widetilde{U}_i,\partial_{u^l}]=
[\phi_i^j\widetilde{V}_j+c_{qi}^pu^q\partial_{u^p},\partial_{u^l}]
=[c_{qi}^pu^q\partial_{u^p},\partial_{u^l}]
=c_{il}^p\partial_{u^p},
\end{eqnarray*}
which ends the proof of Lemma \ref{lemma-5}.
\end{proof}

We will always keep above notations in the discussion below.

\subsection{Invariant spray on a Lie group}
\label{section-3-2}
A spray structure $\mathbf{G}$ on the   Lie group $G$ is called
{\it left invariant} (or {\it right invariant}), if $(\widetilde{L}_g)_*\mathbf{G}=\mathbf{G}$ (or $(\widetilde{R}_g)_*\mathbf{G}=\mathbf{G}$)
for all $g\in G$, or equivalently, $[\widetilde{V}_i,\mathbf{G}]=0$
(or $[\widetilde{U}_i,\mathbf{G}]=0$ respectively) for each $i$.
It is bi-invariant if it is both left and right invariant.

Theorem A claims the existence of the canonical Berwald bi-invariant
spray structure on each Lie group. In later discussion, $\mathbf{G}_0$ is always referred to this special spray structure $\mathbf{G}_0=
u^i\widetilde{U}_i=v^i\widetilde{V}_i$ on each Lie group.\bigskip


\begin{proof}[Proof of Theorem A]
Firstly, we prove that  $\mathbf{G}_0=u^i\widetilde{U}_i$ 
is a Berwald spray structure. Obviously, it is a globally defined tangent field on $TG\backslash0$.

Let $(x^i,y^i)$ be standard local coordinates on $TM$. We apply the notations in (\ref{006}), i.e., $U_i=A^j_i \partial_{x^j}$, and $(B_i^j)=(A_i^j)^{-1}$.
By Lemma \ref{lemma-1},
$\widetilde{U}_i=A^j_i\partial_{x^j}+
y^j\tfrac{\partial}{\partial x^j}A^k_i\ \partial_{y^k}$.
So we have
\begin{eqnarray}\label{005}
\mathbf{G}_0=u^i\widetilde{U}_i=
u^iA^j_i\partial_{x^j}+
u^iy^j\tfrac{\partial}{\partial x^j}A^k_i\ \partial_{y^k}=
y^j\partial_{x^j}+ y^l y^j (B^i_l\tfrac{\partial}{\partial x^j}A^k_i)\
\partial_{y^k},
\end{eqnarray}
Notice that the functions $A^i_j$ and $B^i_j$ only depend on the $x^i$-coordinates. So we see from (\ref{005}) that $\mathbf{G}_0$
is a Berwald spray structure.

Secondly, we prove $u^i\widetilde{U}_i=v^i\widetilde{V}_i$.
By (2) in Lemma \ref{lemma-5} and (\ref{101}),
\begin{eqnarray*}
u^i\widetilde{U}_i=u^i\phi_i^j\widetilde{V}_j+
u^iu^p c^q_{pi}\partial_{u^q}=
v^j\widetilde{V}_j+u^iu^p c^q_{pi}\partial_{u^q}=v^j\widetilde{V}_j,
\end{eqnarray*}
where $u^iu^p c^q_{pi}$ vanishes because $c^q_{pi}=-c^q_{ip}$.

Finally, we apply (\ref{100}) to the presentations
$\mathbf{G}_0=u^i\widetilde{U}_i=v^j\widetilde{V}_j$ and get the
bi-invariancy $[\widetilde{U}_i,\mathbf{G}_0]=[\widetilde{V}_i,\mathbf{G}_0]=0$
for each $i$.
\end{proof}
\bigskip

Then we consider any left invariant spray structure $\mathbf{G}$ on $G$.
Compare $\mathbf{G}$ with $\mathbf{G}_0=u^i\widetilde{U}_i=v^i\widetilde{V}_i$, we see
$\mathbf{H}=\mathbf{G}_0-\mathbf{G}=\mathbf{H}^i\partial_{u^i}$ is a left invariant section of $\mathcal{V}$. The left invariancy of $\mathbf{H}$ implies each $\mathbf{H}^i$ is preserved by the action of $\widetilde{L}_g$, $\forall g\in G$. Since the right invariant tangent fields generate the left translations, we have $\widetilde{V}_j\mathbf{H}^i=0 $, $\forall i,j$. Further more, every $\mathbf{H}^i$ has a degree-two positive homogeneity when restricted to each $T_gG\backslash\{0\}$.

The technique of invariant frame enable us to assemble the Lie bracket coefficients, the invariant $u^i$-coordinates and $\mathbf{H}^i$'s to
global clean curvature formulae for $\mathbf{G}$. The
calculation for S-curvature and Riemann curvature are carried out below as examples.

\subsection{left invariant S-curvature formula}
\label{section-3-3}

Now we use the left invariant frame $\{\widetilde{U}_i,\partial_{u^i},\forall i\}$ and any left invariant smooth measure to calculate the S-curvature $S$
 for the left invariant spray structure $\mathbf{G}=\mathbf{G}_0-\mathbf{H}=\mathbf{G}_0-
\mathbf{H}^i\partial_{u^i}$.

To apply the definition (\ref{118}), we need standard local coordinates $(x^i,y^i)$ on $TM$, and the corresponding notations in (\ref{006}), i.e.,  $U_i=A^j_i\partial_{x^j}$ and $u^i=B_j^i y^j$, where $(A_i^j)$ and $(B_i^j)=(A_i^j)^{-1}$ only depend on the $x^i$-coordinates.

By Lemma \ref{lemma-1} and Theorem A,
\begin{eqnarray*}
\mathbf{G}&=&u^i \widetilde{U}_i-\mathbf{H}^i\partial_{u^i}
=u^i (A_i^j \partial_{x^j}+y^j\tfrac{\partial}{\partial x^j}A^k_i \partial_{y^k})-A_j^k\mathbf{H}^j
 \partial_{y^k}\nonumber\\
&=&y^j \partial_{x^j}-(A_i^k\mathbf{H}^i-u^i y^j\tfrac{\partial}{\partial x^j}A^k_i) \partial_{y^k},
\end{eqnarray*}
so we have
\begin{equation}\label{009}
\mathbf{G}^k=\tfrac12 A_i^k\mathbf{H}^i-
\tfrac12u^iy^l\tfrac{\partial}{\partial x^l}A_i^k.
\end{equation}
Further calculation shows
\begin{equation}\label{111}
N_j^k=\tfrac{\partial}{\partial y^j}G^k=
\tfrac12 A^k_i\tfrac{\partial}{\partial y^j}\mathbf{H}^i-
\tfrac12 u^i\tfrac{\partial}{\partial x^j}A^k_i-
\tfrac12B^i_j y^l\tfrac{\partial}{\partial x^l}A^k_i,
\end{equation}
and its trace is
\begin{equation}\label{112}
N_j^j=\tfrac12 A^j_i\tfrac{\partial}{\partial y^j}\mathbf{H}^i-
\tfrac12 u^i\tfrac{\partial}{\partial x^j}A^j_i-
\tfrac12B^i_j y^l\tfrac{\partial}{\partial x^l}A^j_i.
\end{equation}

From (\ref{118}), we see the S-curvature is irrelevant to the choice of the left invariant measure $d\mu$. So we may take the left invariant volume form
$d\mu=U_1^*\wedge\cdots\wedge U_n^*$ where $\{U_i^*,\forall i\}$ is the dual frame for $\{U_i,\forall i\}$. By standard local coordinates,
$$d\mu=\sigma(x)dx^1\wedge\cdots\wedge dx^n=\det(B_i^j)dx^1\wedge\cdots\wedge dx^n,$$
so we have
\begin{eqnarray}\label{113}
-{\sigma(x)}^{-1}{y^m}\tfrac{\partial}{\partial x^m}\sigma(x)
=-y^m A_j^i\tfrac{\partial}{\partial x^m}B_i^j=
y^m B_i^j\tfrac{\partial}{\partial x^m}A^i_j.
\end{eqnarray}

Adding (\ref{112}) and (\ref{113}), we get
\begin{eqnarray}
S&=&\tfrac12A^j_i\tfrac{\partial}{\partial y^j}\mathbf{H}^i
+\tfrac12y^mB_i^j\tfrac{\partial}{\partial x^m}A_j^i-
\tfrac12 u^i\partial_{x^j}A_i^j\nonumber\\
&=&\tfrac12A^j_i\tfrac{\partial}{\partial y^j}\mathbf{H}^i
+\tfrac12B_i^j u^l A_l^m\tfrac{\partial}{\partial x^m}A_j^i
-\tfrac12 u^i\partial_{x^j}A_i^j\nonumber\\
&=&\tfrac12A^j_i\tfrac{\partial}{\partial y^j}\mathbf{H}^i
+\tfrac12 B_i^j u^l(A_l^m\tfrac{\partial}{\partial x^m}A_j^i-
A_j^m\tfrac{\partial}{\partial x^m}A_l^i)
+(\tfrac12B_i^j u^l A_j^m\tfrac{\partial}{\partial x^m}A_l^i-
\tfrac12 u^i\tfrac{\partial}{\partial{x^j}}A_i^j)\nonumber\\
&=&\tfrac12A^j_i\tfrac{\partial}{\partial y^j}\mathbf{H}^i
+\tfrac12 B_i^j u^l(A_l^m\tfrac{\partial}{\partial x^m}A_j^i-
A_j^m\tfrac{\partial}{\partial x^m}A_l^i)+
(\tfrac12 u^l \tfrac{\partial}{\partial x^i}A_l^i-
\tfrac12 u^i\tfrac{\partial}{\partial{x^j}}A_i^j)\nonumber\\
&=&\tfrac12\tfrac{\partial}{\partial u^i}\mathbf{H}^i
+\tfrac12 B_i^j u^l(A_l^m\tfrac{\partial}{\partial x^m}A_j^i-
A_j^m\tfrac{\partial}{\partial x^m}A_l^i).\label{114}
\end{eqnarray}

We compare the coefficients of $\partial_{x}^k$ in
$$c_{qi}^p A_p^k\partial_{x^k}=c_{qi}^p U_p=[U_q,U_i]=(A^j_q\tfrac{\partial}{\partial x^j}A_i^k-A^j_i\tfrac{\partial}{\partial x^j}A^k_q)\partial_{x}^k,$$
and get
\begin{equation}\label{115}
A^j_q\tfrac{\partial}{\partial x^j}A_i^k-A^j_i\tfrac{\partial}{\partial x^j}A^k_q=
c_{qi}^p A_p^k.
\end{equation}

With the help of (\ref{115}), the calculation in (\ref{114}) can be continued as
\begin{eqnarray*}
S&=&\tfrac12\tfrac{\partial}{\partial u^i}\mathbf{H}^i
+\tfrac12 B_i^j u^l(A_l^m\tfrac{\partial}{\partial x^m}A_j^i-
A_j^m\tfrac{\partial}{\partial x^m}A_l^i)\\
&=&\tfrac12\tfrac{\partial}{\partial u^i}\mathbf{H}^i
+\tfrac12 B_i^j u^l c_{lj}^p A_p^i
=\tfrac12\tfrac{\partial}{\partial u^i}\mathbf{H}^i
+\tfrac12 c_{lj}^j u^l.
\end{eqnarray*}

To summarize, we get the left invariant S-curvature formula
\begin{equation}\label{116}
S=\tfrac12 \tfrac{\partial}{\partial u^i}\mathbf{H}^i
+\tfrac12 c_{lj}^j u^l,
\end{equation}
which ends the proof of Theorem B.

%

\subsection{Left invariant Riemann curvature formula}
\label{section-3-4}
Now we use the left invariant frame $\{\widetilde{U}_i,\partial_{u^i},\forall i\}$ to calculate the Riemann curvature $R$
 for the left invariant spray structure $\mathbf{G}=\mathbf{G}_0-\mathbf{H}=\mathbf{G}_0-
\mathbf{H}^i\partial_{u^i}$.

Firstly, we observe that the horizonal distribution is linearly spanned
by the horizonal liftings $\widetilde{U}_q^{\mathcal{H}}$ for all $q$. So if the Riemann curvature $R$ is viewed as a smooth section
of $\mathcal{H}^*\otimes\mathcal{V}$ over $TG\backslash0$,
it is linearly determined by the evaluations $R(\widetilde{U}_q^\mathcal{H})$ for all $q$.
Calculation shows
\begin{lemma}\label{lemma-6}
The horizonal lifting of $U_q$ is
$\widetilde{U}_q^\mathcal{H}=\widetilde{U}_q-(\tfrac12\tfrac{\partial}{\partial u^q}\mathbf{H}^i
-\tfrac12 u^jc^i_{qj})\partial_{u^i}$.
\end{lemma}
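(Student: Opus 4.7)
The plan is to compute $\widetilde{U}_q^{\mathcal{H}}$ directly from its definition in standard local coordinates, and then convert everything into the left invariant frame $\{\widetilde{U}_i,\partial_{u^i},\forall i\}$ using the dictionary in (\ref{006}), so as to match the claimed expression.

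First, fix standard local coordinates $(x^i,y^i)$ and write $U_q=A_q^i\partial_{x^i}$ as in (\ref{006}). By the definition of horizontal lifting (Section \ref{section-2-3}),
\[
\widetilde{U}_q^{\mathcal{H}}=A_q^i\,\delta_{x^i}=A_q^i\,\partial_{x^i}-A_q^i N_i^k\,\partial_{y^k},
\]
while Lemma \ref{lemma-1} gives
\[
\widetilde{U}_q=A_q^i\,\partial_{x^i}+y^j\tfrac{\partial}{\partial x^j}A_q^k\,\partial_{y^k}.
\]
So the problem is to evaluate the vertical difference
\[
\widetilde{U}_q^{\mathcal{H}}-\widetilde{U}_q=-\bigl(A_q^i N_i^k+y^j\tfrac{\partial}{\partial x^j}A_q^k\bigr)\partial_{y^k}.
\]

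Next, I would substitute formula (\ref{111}) for $N_j^k$ and contract with $A_q^i$. The key cancellation comes from $A_q^i B_i^j=\delta_q^j$, which turns the last term of (\ref{111}) into $-\tfrac12 y^l\tfrac{\partial}{\partial x^l}A_q^k$ and so combines with $y^j\tfrac{\partial}{\partial x^j}A_q^k$ to leave $\tfrac12 y^l\tfrac{\partial}{\partial x^l}A_q^k$. Rewriting $y^l=u^j A_j^l$, the surviving non-$\mathbf{H}$ terms take the form
\[
-\tfrac12 u^j\bigl(A_q^i\tfrac{\partial}{\partial x^i}A_j^k-A_j^i\tfrac{\partial}{\partial x^i}A_q^k\bigr),
\]
which by the bracket identity (\ref{115}) is exactly $-\tfrac12 u^j c_{qj}^p A_p^k$. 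The $\mathbf{H}$-term is $\tfrac12 A_q^i A_j^k\tfrac{\partial}{\partial y^i}\mathbf{H}^j$.

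Finally, I would translate back to the left invariant frame. Using $\partial_{y^k}=B_k^l\partial_{u^l}$ (dual to the third identity in (\ref{006})), the factor $A_p^k\partial_{y^k}$ collapses to $\partial_{u^p}$, and the factor $A_j^k\partial_{y^k}$ collapses to $\partial_{u^j}$. Similarly $A_q^i\tfrac{\partial}{\partial y^i}=\tfrac{\partial}{\partial u^q}$ since $\tfrac{\partial}{\partial y^i}=B_i^k\tfrac{\partial}{\partial u^k}$. Collecting these identifications yields
\[
\widetilde{U}_q^{\mathcal{H}}-\widetilde{U}_q=-\bigl(\tfrac12\tfrac{\partial}{\partial u^q}\mathbf{H}^i-\tfrac12 u^j c_{qj}^i\bigr)\partial_{u^i},
\]
which is the claimed formula. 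The only real obstacle is bookkeeping: making sure the $(A_i^j)$ and $(B_i^j)$ contractions collapse correctly and that the bracket identity (\ref{115}) is invoked with the correct index pairing; once these are set up, the identification of $\partial_{y^k}$ with $B_k^l\partial_{u^l}$ and $A_q^i\partial_{y^i}$ with $\partial/\partial u^q$ makes the final form drop out.
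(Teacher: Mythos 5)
Your proposal is correct and follows essentially the same route as the paper: both compute $\widetilde{U}_q^{\mathcal{H}}=A_q^j\delta_{x^j}$ in standard coordinates, substitute the expression (\ref{111}) for $N_j^k$, exploit the cancellation coming from $A_q^jB_j^i=\delta_q^i$, invoke the bracket identity (\ref{115}), and convert back to the left invariant frame via (\ref{006}). The sign and index bookkeeping in your sketch checks out, so it is the paper's proof up to the cosmetic choice of isolating the vertical difference $\widetilde{U}_q^{\mathcal{H}}-\widetilde{U}_q$ first.
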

\begin{proof}
Let $(x^i,y^i)$ be the standard local coordinates on $TM$ with
$U_q=A_q^j\partial_{x^j}$ and $u^i=y^jB^i_j$. By (\ref{111})
and (\ref{115}),
we have
\begin{eqnarray}
\widetilde{U}_q^\mathcal{H}&=&A_q^j\delta_{x^j}
=A_q^j\partial_{x^j}-A_q^j N_j^k\partial_{y^k}\nonumber\\
&=&\widetilde{U}_q-y^j\tfrac{\partial}{\partial x^j}A^l_q\partial_{y^l}-\tfrac12A_q^jA_i^k
\tfrac{\partial}{\partial y^j}\mathbf{H}^i \partial_{y^k}+\tfrac12 u^i A^j_q\tfrac{\partial}{\partial x^j}A_i^k\partial_{y^k}
+\tfrac12 y^l\tfrac{\partial}{\partial x^l}A^k_q\partial_{y^k}\nonumber\\
&=&\widetilde{U}_q-\tfrac12\tfrac{\partial}{\partial u^q}\mathbf{H}^i\partial_{u^i}+
\tfrac12u^i(A^j_q\tfrac{\partial}{\partial x^j}A_i^k-A^j_i\tfrac{\partial}{\partial x^j}A^k_q)\partial_{y^k}\nonumber\\
&=&\widetilde{U}_q-\tfrac12\tfrac{\partial}{\partial u^q}
\mathbf{H}^i\partial_{u^i}+\tfrac12 c_{qi}^l A^k_lu^i\partial_{y^k}
\nonumber\\
&=&\widetilde{U}_q-(\tfrac12\tfrac{\partial}{\partial u^q}
\mathbf{H}^i-\tfrac12 c_{qj}^i u^j)\partial_{u^i},
\label{102}
\end{eqnarray}
which ends the proof of Lemma \ref{lemma-6}.
\end{proof}

Nextly, we calculate $R(\widetilde{U}_q^{\mathcal{H}})$, i.e., the $\mathcal{V}$-summand in $[\mathbf{G},\widetilde{U}_q^\mathcal{H}]$.
Using Lemma \ref{lemma-5} and Lemma \ref{lemma-6}, we get
\begin{eqnarray}
& &[-\mathbf{H}^i\partial_{u^i},\widetilde{U}_q^\mathcal{H}]
\nonumber\\
&=&[-\mathbf{H}^i\partial_{u^i},\widetilde{U}_q
-\tfrac12\tfrac{\partial}{\partial u^q}\mathbf{H}^i\partial_{u^i}
+\tfrac12 c^i_{qj} u^j\partial_{u^i}]
\nonumber\\
&=&\widetilde{U}_q\mathbf{H}^i\partial_{u^i}
+\mathbf{H}^i[\widetilde{U}_q,\partial_{u^i}]
+\tfrac12[\mathbf{H}^i\partial_{u^i},\tfrac{\partial}{\partial u^q}\mathbf{H}^i\partial_{u^i}]-
\tfrac12 c^i_{qj}[\mathbf{H}^p\partial_{u^p},u^j\partial_{u^i}]\nonumber\\
&=&c^r_{pq}u^p\tfrac{\partial}{\partial u^r}\mathbf{H}^i\partial_{u^i}+c^i_{qp}
\mathbf{H}^p\partial_{u^i}
+\tfrac12\mathbf{H}^p\tfrac{\partial^2}{\partial u^p\partial u^q}
\mathbf{H}^i\partial_{u^i}-\tfrac12\tfrac{\partial}{\partial u^p}
\mathbf{H}^p\tfrac{\partial}{\partial u^p}\mathbf{H}^i\partial_{u^i}\nonumber\\
& &-\tfrac12c^i_{qj}\mathbf{H}^j\partial_{u^i}
+\tfrac12c^p_{qj}u^j\tfrac{\partial}{\partial u^p}\mathbf{H}^i\partial_{u^i}\nonumber\\
&=&\tfrac12 c^r_{pq}u^p\tfrac{\partial}{\partial u^r}\mathbf{H}^i\partial_{u^i}+
\tfrac12c_{qj}^i\mathbf{H}^j\partial_{u^i}
+\tfrac12\mathbf{H}^p\tfrac{\partial^2}{\partial u^p\partial u^q}
\mathbf{H}^i\partial_{u^i}-
\tfrac12\tfrac{\partial}{\partial u^q}
\mathbf{H}^p
\tfrac{\partial}{\partial u^p}\mathbf{H}^i
\partial_{u^i}.\label{103}
\end{eqnarray}
By Theorem A, $\mathbf{G}_0=u^p\widetilde{U}_p=v^p\widetilde{V}_p$ commutes with every $\widetilde{U}_q$. So we can use (4) of Lemma \ref{lemma-5} and Lemma \ref{lemma-6}
to get
\begin{eqnarray}
& &[u^p\widetilde{U}_p,\widetilde{U}_q^\mathcal{H}]
=[u^p\widetilde{U}_p,\widetilde{U}_q-
\tfrac12(\tfrac{\partial}{\partial u^q}\mathbf{H}^i
-c^i_{qj}u^j)\partial_{u^i}]\nonumber\\
&=&[u^p\widetilde{U}_p,-\tfrac12(\tfrac{\partial}{\partial u^q}
\mathbf{H}^i-c^i_{qj}u^j)\partial_{u^i}]\nonumber\\
&=&\tfrac12(\tfrac{\partial}{\partial u^q}\mathbf{H}^p
-c^p_{qj}u^j)\widetilde{U}_p-
\tfrac12u^p(\tfrac{\partial}{
\partial u^q}\mathbf{H}^i-c^i_{qj}u^j)
[\widetilde{U}_p,\partial_{u^i}]
\nonumber\\
&=&\tfrac12(\tfrac{\partial}{\partial u^q}\mathbf{H}^p
-c^p_{qj}u^j)(\widetilde{U}_p^\mathcal{H}+
\tfrac12(\tfrac{\partial}{\partial u^p}\mathbf{H}^i-c^i_{pr}u^r)\partial_{u^i})-\tfrac12 u^p
(\tfrac{\partial}{\partial u^q}\mathbf{H}^r-c^r_{qj}u^j)c^i_{pr}
\partial_{u^i}\nonumber\\
&=&\tfrac14(\tfrac{\partial}{\partial u^q}\mathbf{H}^p
-c^p_{qj}u^j)
(\tfrac{\partial}{\partial u^p}\mathbf{H}^i-c^i_{pr}u^r)\partial_{u^i}-
\tfrac12 u^p
(\tfrac{\partial}{\partial u^q}\mathbf{H}^r-c^r_{qj}u^j)c^i_{pr}
\partial_{u^i}
\quad\mbox{(mod }\mathcal{H}\mbox{)}\nonumber\\
&=&
\tfrac14\tfrac{\partial}{\partial u^q}\mathbf{H}^p
\tfrac{\partial}{\partial u^p}\mathbf{H}^i\partial_{u^i}-
\tfrac14 c^p_{qj} u^j\tfrac{\partial}{\partial u^p}
\mathbf{H}^i\partial_{u^i}+
\tfrac14c^i_{pr} u^r\tfrac{\partial}{\partial u^q}
\mathbf{H}^p\partial_{u^i}-
\tfrac14c^p_{qj}c^i_{pr}u^ju^r\partial_{u^i}.
\label{104}
\end{eqnarray}
Adding (\ref{103}) and (\ref{104}),
we get
\begin{eqnarray}
R(\widetilde{U}^\mathcal{H}_q)&=&\tfrac34 c^r_{pq}u^p
\tfrac{\partial}{\partial u^r}\mathbf{H}^i\partial_{u^i}+
\tfrac12c^i_{qj}\mathbf{H}^j\partial_{u^i}+
\tfrac12\mathbf{H}^p\tfrac{\partial^2}{\partial u^p\partial u^q}
\mathbf{H}^i\partial_{u^i}\nonumber\\& &-
\tfrac14\tfrac{\partial}{\partial u^q}
\mathbf{H}^p\tfrac{\partial}{\partial u^p}\mathbf{H}^i\partial_{u^i}+
\tfrac14 c^i_{pr}u^r\tfrac{\partial}{\partial u^q}\mathbf{H}^p
\partial_{u^i}-
\tfrac14 c^p_{qj}c^i_{pr}u^ju^r\partial_{u^i},
\label{105}
\end{eqnarray}
which ends the proof of Theorem C.

\section{Spray vector field for a left invariant spray structure}
\subsection{Spray vector field and left invariant curvature formulae}
By the left invariancy, we only need to study a left invariant spray structure $\mathbf{G}=\mathbf{G}_0-\mathbf{H}=
\mathbf{G}_0-\mathbf{H}^i\partial_{u^i}$ at $e\in G$. So the restriction of $\mathbf{H}$ to $T_eG\backslash\{0\}$
is crucial. According the
convention of L. Huang \cite{Hu2015-1},
we present it as the smooth map
$\eta:\mathfrak{g}\backslash\{0\}\rightarrow\mathfrak{g}$,
$\eta(y)=\mathbf{H}^i(e,y)e_i$, $\forall y=u^i e_i\in\mathfrak{g}\backslash\{0\}$, and call it the {\it
spray vector field} for $\mathbf{G}$.

Using the spray vector field $\eta$,
we can algebraize the curvature formulae for $\mathbf{G}$.
Take the S-curvature $S$ and Riemann curvature $R$ for example.
Their restrictions to $\mathfrak{g}\backslash\{0\}$, i.e.,
the real function $S(\cdot)$ on $\mathfrak{g}\backslash\{0\}$ and the linear maps $R_y(\cdot)$ on $\mathfrak{g}$ for every $y\in\mathfrak{g}\backslash\{0\}$ respectively, contain all information of these curvatures.

We apply the following notations to translate Theorem B and Theorem C to more familiar curvature formulae.
Denote $D\eta(y,v)$ the derivative of $\eta$ at $y\in\mathfrak{g}\backslash\{0\}$
in the direction of $v$, $N(y,v)=\tfrac12D\eta(y,v)-\tfrac12[y,v]$, and
$DN(y,v,u)$ the derivative of $N(\cdot,v):\mathfrak{g}\backslash\{0\}
\rightarrow\mathfrak{g}$ at $y$ in the direction of $u$. Then easy calculation shows

\begin{corollary}\label{cor-4}
Let $\mathbf{G}$ be the left invariant spray structure on the Lie group $G$ associated with the spray vector field $\eta:\mathfrak{g}\backslash\{0\}\rightarrow \mathfrak{g}$,
then the S-curvature and the Riemann curvature satisfy
\begin{eqnarray}
S(y)&=&\mathrm{tr}_\mathbb{R}(N(y,\cdot)+\mathrm{ad}(y)),\quad\label{117}\mbox{and}\\
R_y(v)&=&
DN(y,v,\eta(y))-N(y,N(y,v))+N(y,[y,v])-[y,N(y,v)],\label{108}
\end{eqnarray}
respectively,
for any $y\in\mathfrak{g}\backslash\{0\}$ and $v\in\mathfrak{g}$.
\end{corollary}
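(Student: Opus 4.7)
My plan is to obtain Corollary \ref{cor-4} as a direct algebraization of Theorem B and Theorem C using the spray vector field $\eta$. By left invariance of $\mathbf{G}$, $\mathbf{G}_0$, and hence $\mathbf{H}$, both $S$ and the Riemann curvature tensor are completely determined by their restrictions along $T_eG\backslash\{0\}=\mathfrak{g}\backslash\{0\}$. So it suffices to verify both formulae at $y=u^ie_i\in\mathfrak{g}\backslash\{0\}$, where I can use the dictionary $\eta(y)^i=\mathbf{H}^i(e,y)$, $D\eta(y,e_k)^i=\tfrac{\partial}{\partial u^k}\mathbf{H}^i(e,y)$, and $\mathrm{ad}(y)(e_k)^i=c^i_{jk}u^j$ to read off each Lie-algebraic operation on the right-hand side of (\ref{117}) and (\ref{108}) as a coordinate expression in the bases provided by $\{e_i\}$ and $\{\partial_{u^i}\}$.

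For the S-curvature (\ref{117}), the dictionary immediately gives $\mathrm{tr}_{\mathbb{R}}(D\eta(y,\cdot))=\tfrac{\partial}{\partial u^i}\mathbf{H}^i$ and $\mathrm{tr}_{\mathbb{R}}(\mathrm{ad}(y))=c^j_{lj}u^l$. Since $N(y,\cdot)=\tfrac12 D\eta(y,\cdot)-\tfrac12\mathrm{ad}(y)$, one finds $\mathrm{tr}_{\mathbb{R}}(N(y,\cdot)+\mathrm{ad}(y))=\tfrac12\mathrm{tr}_{\mathbb{R}}(D\eta(y,\cdot))+\tfrac12\mathrm{tr}_{\mathbb{R}}(\mathrm{ad}(y))$, which is exactly the formula of Theorem B.

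For the Riemann curvature (\ref{108}), I would set $v=e_q$ and expand the four summands of the right-hand side in coordinates, using $N(y,e_q)^i=\tfrac12\tfrac{\partial}{\partial u^q}\mathbf{H}^i-\tfrac12 c^i_{pq}u^p$ and $[y,e_q]^i=c^i_{pq}u^p$. The term $DN(y,e_q,\eta(y))$, obtained by differentiating $N(\cdot,e_q)$ in the direction $\mathbf{H}^pe_p$, contributes the second-derivative summand $\tfrac12\mathbf{H}^p\tfrac{\partial^2}{\partial u^p\partial u^q}\mathbf{H}^i$ and half of the $\mathbf{H}$-linear summand $\tfrac12 c^i_{qj}\mathbf{H}^j$ of Theorem C. The remaining three terms $-N(y,N(y,e_q))$, $N(y,[y,e_q])$, $-[y,N(y,e_q)]$ produce all other monomials in $\mathbf{H}^\bullet$, $\tfrac{\partial \mathbf{H}^\bullet}{\partial u^\bullet}$, $u^\bullet$, $c^\bullet_{\bullet\bullet}$. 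After collecting like terms and relabeling dummy indices, the $-\tfrac14\tfrac{\partial \mathbf{H}^p}{\partial u^q}\tfrac{\partial \mathbf{H}^i}{\partial u^p}$ piece emerges solely from $-N(y,N(y,e_q))$; the $\tfrac34 c^r_{pq}u^p\tfrac{\partial \mathbf{H}^i}{\partial u^r}$ piece is the sum $\tfrac14+\tfrac12$ of contributions from $-N(y,N(y,e_q))$ and $N(y,[y,e_q])$; the $\tfrac14 c^i_{pr}u^r\tfrac{\partial \mathbf{H}^p}{\partial u^q}$ piece is the sum $\tfrac14-\tfrac12$ from $-N(y,N(y,e_q))$ and $-[y,N(y,e_q)]$ once the antisymmetry $c^k_{ij}=-c^k_{ji}$ is applied; and the purely quadratic $-\tfrac14 c^p_{qj}c^i_{pr}u^ju^r$ piece is the three-way sum $-\tfrac12+\tfrac12-\tfrac14$ from $N(y,[y,e_q])$, $-[y,N(y,e_q)]$, and $-N(y,N(y,e_q))$ respectively.

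The computation is mechanical, so the main obstacle is the index bookkeeping: the peculiar coefficients $\tfrac34$ and the lone $-\tfrac14$ in front of the $cc\,uu$ monomial arise from delicate cancellations that must be verified with repeated applications of the antisymmetry $c^k_{ij}=-c^k_{ji}$. Notably, the Jacobi identity is not needed, since every step is linear in the bracket coefficients. Once the matching with Theorem C is complete term by term, (\ref{108}) follows and Corollary \ref{cor-4} is established.
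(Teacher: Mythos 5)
Your proposal is correct and is exactly the route the paper takes: the paper's entire ``proof'' of Corollary \ref{cor-4} is the sentence ``easy calculation shows,'' preceded by the same dictionary $\eta(y)^i=\mathbf{H}^i(e,y)$, $D\eta(y,e_k)^i=\tfrac{\partial}{\partial u^k}\mathbf{H}^i$, $\mathrm{ad}(y)e_k=c^i_{jk}u^je_i$ that you set up, and your term-by-term bookkeeping (including the $\tfrac14+\tfrac12$, $\tfrac14-\tfrac12$, and $-\tfrac12+\tfrac12-\tfrac14$ splittings and the observation that Jacobi is never needed) checks out against Theorem C. One phrasing to tighten: $DN(y,e_q,\eta(y))=\tfrac12 D^2\eta(y)(e_q,\eta(y))-\tfrac12[\eta(y),e_q]$ supplies the \emph{entire} summand $\tfrac12 c^i_{qj}\mathbf{H}^j$ (no other term produces an undifferentiated $\mathbf{H}$), so ``half of the $\mathbf{H}$-linear summand'' should be read as describing the coefficient $\tfrac12$, not as a contribution of $\tfrac14 c^i_{qj}\mathbf{H}^j$.
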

\begin{remark}
The curvature formulae (\ref{117}) and (\ref{108}) are just
those L. Huang found in homogeneous Finsler geometry. The
reason for these coincidences is the following.
\end{remark}

Consider a left invariant Finsler metric on $G$ which induces $\mathbf{G}$. This metric is one-to-one determined by a Minkowski norm $F$ on $\mathfrak{g}$ \cite{De2012}. Let $g_y(\cdot,\cdot)$ be
the fundamental tensor of $F$, then Theorem 3.1 in \cite{XD2015} implies the spray vector field $\eta$ that L. Huang defined for a left invariant Finsler metric \cite{Hu2015-1}, i.e.,
\begin{equation}\label{121}
g_y(\eta(y),u)=g_y(y,[u,y]),\quad\forall y\in\mathfrak{g}\backslash\{0\},
\end{equation}
coincides with the one we use here. Further more, L. Huang observed
that his {\it connection operator} $N:(\mathfrak{g}\backslash\{0\})
\times\mathfrak{g}\rightarrow\mathfrak{g}$
\cite{Hu2015-1} can be presented as $N(y,v)=\tfrac12D\eta(y,v)-\tfrac12[y,v]$, i.e., the
one in Corollary \ref{cor-4}.

\subsection{Proof of Theorem D}
\label{section-4-2}
%
Let $\mathbf{G}=\mathbf{G}_0-\mathbf{H}^i\partial_{u^i}$ be a left invariant spray structure on the Lie group $G$, and $\eta:\mathfrak{g}\backslash\{0\}\rightarrow\mathfrak{g}$ its spray vector field.
Now we prove Theorem D, i.e., the correspondence between geodesics for $\mathbf{G}$
and integral curves of $-\eta$.

Let $c(t)$ be any geodesic for $\mathbf{G}$. Using standard local coordinates $(x^i,y^i)$, it can be presented as $c(t)=(c^i(t))$ which satisfies
\begin{equation}\label{010}
\ddot{c}^i(t)=-2\mathbf{G}^i(c(t),\dot{c}(t)), \quad\forall i,
\end{equation}
where $-2\mathbf{G}^i$ is coefficient in the presentation
$\mathbf{G}=y^i\partial_{x^i}-2\mathbf{G}^i\partial_{x^i}$.

Now we switch to left invariant frame $\{\widetilde{U}_i,\partial_{u^i},\forall i\}$ with $U_i(g)=A_i^j(g)\partial_{x^j}$ and $u^i=B^i_j y^j$. We denote $\dot{c}(t)=u^i(t)U_i(c(t))$, then $\dot{c}^i(t)=A_j^i(c(t))u^j(t)$ satisfies
\begin{eqnarray}\label{119}
\ddot{c}^i(t)=\tfrac{d}{dt}(A_j^i(c(t))u^j(t))=
A_j^i(c(t))\dot{u}^j(t)+u^j(t)\dot{c}^k(t)\tfrac{\partial}{\partial x^k}A^i_j(c(t)).
\end{eqnarray}
Meanwhile, using (\ref{009}), we can change
(\ref{010}) to
\begin{equation}\label{120}
\ddot{c}^i(t)=-A^i_j(c(t))\mathbf{H}^j(c(t),\dot{c}(t))+u^j(t)\dot{c}^k(t)
\tfrac{\partial}{\partial x^k}A^i_j(c(t)).
\end{equation}
Compare (\ref{119}) and (\ref{120}), we get
\begin{equation}\label{011}
\dot{u}^j(t)+\mathbf{H}^j(c(t),\dot{c}(t))=0,\quad\forall j.
\end{equation}
By the left invariancy, $\mathbf{H}^j(c(t),\dot{c}(t))=\mathbf{H}^j(e,y(t))$, where $y(t)=(L_{c(t)^{-1}})_* (\dot{c}(t))=u^j(t) e_j$, so
 (\ref{011}) implies that
$y(t)=(L_{c(t)^{-1}})_* \dot{c}(t)=u^j(t)  e_j$ is an integral curve of
$-\eta=-\mathbf{H}|_{T_eG\backslash\{0\}}$.

Above argument provide the correspondence from (1) to (2) in Theorem D. Then we discuss its inverse.

For any smooth map $y(t):(a,b)\rightarrow\mathfrak{g}\backslash\{0\}=T_eG\backslash\{0\}$, $a<0<b$, we claim the ODE
\begin{equation}\label{012}
\dot{c}(t)=(L_{c(t)})_*y(t)
 \end{equation}
has a unique solution for $t\in(a,b)$ satisfying $c(0)=e$. To prove our claim, we only need to show the existence of $c(t)$ for $t\in(a+\epsilon,b-\epsilon)$, where $\epsilon>0$ is arbitrarily small.

 Using the existence theory for ODE and by the compactness of $[a+\epsilon,b-\epsilon]$, we can find a sufficiently small positive $\delta>0$, such that for each $t_0\in[a+\epsilon,b-\epsilon]$, the
solution $c_{t_0}(t)$ of
\begin{equation}\label{013}
\dot{c}_{t_0}(t)=(L_{c_{t_0}(t)})_*(y(t-t_0)),\quad\mbox{with}\quad c_{t_0}(0)=e
\end{equation}
uniquely exists for $t\in (-\delta,\delta)$.

Let $(a',b')$ be the maximal open sub-interval
in $(a,b)$ on which the solution $c(t)$ of (\ref{012}) with $c(0)=e$ exists. Obviously $a'$ and $b'$ exist and $a'\leq -\delta$ and $b'\geq\delta$. Now
We prove $b'> b-\epsilon$ and $a'<a+\epsilon$.
Assume conversely $b'\leq b-\epsilon$. Then we take $t_0=b'-\tfrac{\delta}{2}$ and denote $c_{t_0}(t)$ with $t\in(-\delta,\delta)$ the corresponding solution of (\ref{013}). By the left invariancy of the equation (\ref{012}), $c_1(t)=c(t_0)c_{t_0}(t-t_0)$ with $t\in (t_0-\delta,t_0+\delta)$ is a solution of (\ref{012}) satisfying $c_1(t_0)=c(t_0)$. By the uniqueness theory for ODE, the solution $c(t)$ for (\ref{012}) can be extended to $(a',b'+\tfrac{\delta}2)$. This is a contradiction with our assumption for $b'$.
Similarly, we can also prove $a'<a+\epsilon$.

To summarize, the solution $c(t)$ of (\ref{012}) with $c(0)=e$ exists uniquely on $(a+\epsilon,b-\epsilon)$ for arbitrarily small $\epsilon>0$. So the solution $c(t)$ exists uniquely on $(a,b)$, which proves our claim.

Finally,
we can apply similar argument as for the correspondence from (1) to (2) to prove $c(t)$ is a geodesic on $(G,\mathbf{G})$ when $y(t)$ is
an integral curve of $-\eta$. This ends the proof of Theorem B.

\subsection{Remarks on bi-invariant spray structure}

Apply Theorem B, Theorem C and Theorem D to the canonical bi-invariant spray structure $\mathbf{G}_0$, we get
the following immediate corollary.
\begin{corollary}\label{cor-1}
For the canonical bi-invariant spray structure $\mathbf{G}_0$ on a Lie group $G$,
the set of all maximally extended geodesics consists of $c(t)=g\cdot\exp tX$ for all $g\in G$ and $X\in\mathfrak{g}$, the S-curvature is
$S=\tfrac12c^j_{lj} u^l$,
and the Riemann curvature satisfies
$R(\widetilde{U}_q)=
-\tfrac14 c^p_{qj}c^i_{pr}u^j u^r\partial_{u^i}$ for each $q$.
\end{corollary}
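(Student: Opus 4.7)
The plan is to specialize Theorem B, Theorem C, and Theorem D to the case of the canonical bi-invariant spray structure $\mathbf{G}_0$. By definition, writing $\mathbf{G}_0 = \mathbf{G}_0 - \mathbf{H}$ forces $\mathbf{H} = 0$, and hence $\mathbf{H}^i \equiv 0$ for every $i$, which equivalently means the associated spray vector field is $\eta \equiv 0$ on $\mathfrak{g}\setminus\{0\}$. Each of the three claims then reduces to discarding the terms that contain $\mathbf{H}^i$ or $\eta$.

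For the S-curvature statement, I would plug $\mathbf{H}^i \equiv 0$ into the formula $S = \tfrac12\tfrac{\partial}{\partial u^i}\mathbf{H}^i + \tfrac12 c_{lj}^j u^l$ of Theorem B; the first summand drops out and $S = \tfrac12 c_{lj}^j u^l$ survives. For the Riemann curvature, I would similarly substitute $\mathbf{H}^i \equiv 0$ into the six-term expression in Theorem C; the first five terms vanish because each carries at least one factor of $\mathbf{H}^i$ or a partial derivative thereof, leaving only $R(\widetilde{U}_q^{\mathcal{H}}) = -\tfrac14 c^p_{qj} c^i_{pr} u^j u^r \partial_{u^i}$. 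Note that when $\mathbf{H} \equiv 0$, Lemma \ref{lemma-6} gives $\widetilde{U}_q^{\mathcal{H}} = \widetilde{U}_q + \tfrac12 c^i_{qj} u^j \partial_{u^i}$, but since $R$ is a tensorial object and both sides are evaluated in terms of $\widetilde{U}_q^{\mathcal{H}}$ in Theorem C, I can simply rewrite the result in terms of $\widetilde{U}_q$ as stated in the corollary (this is harmless because only the $\mathcal{V}$-component is recorded).

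For the geodesics, I would apply Theorem D with $\eta \equiv 0$. The integral curves of $-\eta$ on $\mathfrak{g}\setminus\{0\}$ are then precisely the constant curves $y(t) \equiv X$ for any $X \in \mathfrak{g}\setminus\{0\}$, defined on all of $\mathbb{R}$. The correspondence in Theorem D says that the geodesic $c(t)$ through $e$ matching such a constant $y(t) \equiv X$ is characterized by $\dot{c}(t) = (L_{c(t)})_* X$, which is exactly the defining ODE for the one-parameter subgroup $c(t) = \exp(tX)$; the maximal existence interval is $\mathbb{R}$ because $\exp(tX)$ is globally defined on any Lie group. Finally, left invariance of $\mathbf{G}_0$ (Theorem A) lets me left-translate any such geodesic by an arbitrary $g \in G$, producing the full family $c(t) = g \cdot \exp(tX)$; conversely every geodesic arises this way since it must pass through some $g$ at $t = 0$ and reduce to the $e$-based case after left-translating by $g^{-1}$.

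The only step that requires any thought beyond direct substitution is verifying that the geodesics obtained are indeed \emph{maximally} extended, but this is immediate from the fact that $\exp : \mathfrak{g} \to G$ is defined on all of $\mathfrak{g}$, so no finite-time blow-up can occur; everything else is a mechanical specialization of the three main theorems.
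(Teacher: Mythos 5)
Your proposal is correct and is exactly the paper's intended argument: the paper presents this as an ``immediate corollary'' obtained by applying Theorems B, C and D with $\mathbf{H}\equiv 0$ (equivalently $\eta\equiv 0$), which is precisely your specialization, including the observation that $R(\widetilde{U}_q)=R(\widetilde{U}_q^{\mathcal{H}})$ since they differ by a vertical vector annihilated by the tensor $R\in\mathcal{H}^*\otimes\mathcal{V}$. Nothing further is needed.
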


Using Corollary \ref{cor-4}, the S-curvature and Riemann curvature for $\mathbf{G}_0$ can also
be presented as $S(y)=\tfrac12\mathrm{tr}_\mathbb{R}\mathrm{ad}(y)$ and
$R_y(v)=-\tfrac14[y,[y,v]]$.

From these simple observations, we see many differences between spray geometry and Finsler geometry.

Firstly, bi-invariant Finsler metrics exist
on a Lie group $G$ iff its Lie algebra is compact. The spray structure induced by a bi-invariant Finsler metric must be $\mathbf{G}_0$. Further more, bi-invariant Finsler metrics always have vanishing S-curvature and are non-negatively curved.

But spray geometry tells us different stories.
Theorem A claims at least one bi-invariant spray structure $\mathbf{G}_0$ on every Lie group. A Lie group may have many different bi-invariant spray structures (see Example 4.1.3 in \cite{Sh2001} for another bi-invariant spray structure on $\mathbb{R}^2$). Corollary \ref{cor-1} indicates the S-curvature for
$\mathbf{G}_0$ does  not vanish when $G$ is not unimodular, and $R_y(v)=-\tfrac14[y,[y,v]]$
may have negative eigenvalues and may not be as diagonalizable as in Finsler geometry.

Secondly,  for a left invariant Finsler metric on a non-Abelian nilpotent Lie group, its Ricci scalar (i.e. the trace of the Riemann curvature) must be somewhere positive and somewhere negative \cite{Hu2015-2,Wo1964}. However, Corollary \ref{cor-4} tells us $\mathbf{G}_0$ is flat (i.e., the Riemann curvature vanishes identically), when $G$ is a two-step nilpotent Lie group (a Heisenberg group for example).

Finally, homogeneous Finsler spaces are all geodesically complete.
However, we can construct many incomplete bi-invariant spray structures. For example,
$$\mathbf{G}=y^1\partial_{x^1}+\cdots+y^n\partial_{x^n}-
\sqrt{(y^1)^2+\cdots+(y^n)^2}(y^1\partial_{y^1}+\cdots+
y^n\partial_{y^n})$$
is an incomplete bi-invariant spray structure on $\mathbb{R}^n$. Notice that any maximally extended integral curve $y(t)$ for $$-\eta= \sqrt{(y^1)^2+\cdots+(y^n)^2}(y^1\partial_{y^1}+\cdots+
y^n\partial_{y^n})$$
is a ray initiating from the origin, corresponding to $t\in(-\infty,C)$ for some $C\in\mathbb{R}$. By Theorem D, this spray structure is not complete.

\subsection{Application to a special case of Landsberg Conjecture}
Recently, B. Najafi and A. Tayebi proved the special case of
Landsberg Conjecture for two-dimensional homogeneous Finsler manifolds (i.e., homogeneous Finsler surfaces). Their theorem
claims
\begin{theorem}\label{thm-4}
Any homogeneous Landsberg surface is Riemannian or locally Minkowskian.
\end{theorem}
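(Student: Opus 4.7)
The strategy is to reduce to a $2$-dimensional Lie group with left-invariant Landsberg metric, and then use Theorem D to translate the Landsberg condition into an equation on the spray vector field $\eta$ that forces the main scalar to vanish.

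First, I would perform the standard reduction to the simply transitive case. Let $(M,F)$ be a homogeneous Landsberg surface and let $G$ be a transitive group of isometries with isotropy subgroup $H$ at $p\in M$. The action of $H$ on $T_pM\cong\mathbb{R}^2$ is by linear transformations preserving the Minkowski norm $F_p$. If $\dim H\geq 1$, then $H$ contains a one-parameter subgroup of $\mathrm{GL}(2,\mathbb{R})$ preserving $F_p$; every such closed positive-dimensional subgroup is conjugate to $\mathrm{SO}(2)$, so $F_p$ is rotation-invariant with respect to some Euclidean structure and therefore Riemannian. By homogeneity $F$ is then Riemannian everywhere, and we are done. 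Hence I may assume the isotropy is discrete. Passing to the identity component of $G$ and then to the universal cover, $M$ becomes a $2$-dimensional connected Lie group $\widetilde{G}$ carrying a left-invariant Landsberg metric $F$, and it suffices to prove $F$ is Riemannian or locally Minkowskian in this setting.

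Next I would split on the Lie algebra $\mathfrak{g}$. If $\mathfrak{g}$ is abelian then $\widetilde G$ is locally $\mathbb{R}^2$ and any left-invariant Finsler metric is translation-invariant, hence locally Minkowskian. Otherwise $\mathfrak{g}$ is the unique non-abelian two-dimensional Lie algebra, for which I pick a basis with $[e_1,e_2]=e_2$. The spray vector field is determined by the Huang equation $g_y(\eta(y),u)=g_y(y,[u,y])$ for all $u\in\mathfrak{g}$, $y\in\mathfrak{g}\backslash\{0\}$, and Theorem D says that every geodesic $c(t)$ with $c(0)=e$ is recovered from an integral curve $y(t)$ of $-\eta$ via $\dot c(t)=(L_{c(t)})_* y(t)$.

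The core step is to read off the Landsberg tensor in these coordinates. On a Finsler surface the Cartan tensor reduces to a single main scalar $I$ on the projective indicatrix $\{F=1\}/\mathbb{R}_{>0}\subset\mathfrak{g}$, and the Landsberg condition $L=0$ is the statement that $I$ is constant along the Berwald-horizontal flow. Using Corollary \ref{cor-4} the connection operator $N(y,v)=\tfrac12 D\eta(y,v)-\tfrac12[y,v]$ controls exactly this horizontal transport when pulled back to $\mathfrak{g}$ via $(L_{c(t)^{-1}})_*$, so $L=0$ becomes the ODE
\[
\bigl(D I\bigr)(y)\cdot \eta(y) - (\text{connection correction in terms of }N) = 0
\]
along every integral curve of $-\eta$ on the indicatrix. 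With $[e_1,e_2]=e_2$ one can expand $\eta$ and $N$ explicitly and show that the resulting identity along the closed indicatrix curve admits only the constant solution $I\equiv 0$, meaning $F$ is Riemannian. Combining the two cases yields the theorem.

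The main obstacle is this last step: carrying out the translation of $L=0$ into a genuinely tractable ODE on the indicatrix and then verifying that, under the non-abelian bracket $[e_1,e_2]=e_2$, this ODE has no nontrivial periodic solutions. The algebraic input is elementary (a single structure constant); the analytic input is that the indicatrix is a smooth closed convex curve in $\mathfrak{g}\cong\mathbb{R}^2$ on which $I$ must be globally defined and smooth, so any monotonicity argument or integral obstruction along a full orbit of $-\eta$ on the indicatrix will force $I\equiv 0$.
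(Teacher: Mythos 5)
Your overall strategy matches the paper's: reduce to a left invariant Landsberg metric on the two-dimensional non-abelian Lie group with $[e_1,e_2]=e_2$, use Theorem D to transfer the Landsberg condition to constancy of the main scalar $I$ along integral curves of $-\eta$ on the indicatrix $S_F\subset\mathfrak{g}$, and then invoke a global obstruction on the closed indicatrix. The reduction (your isotropy argument is a reasonable variant of the paper's $\dim G=3$ case) and the abelian case are fine, and the translation of $L=0$ into constancy along the flow of $-\eta$ is correct in principle; in fact for the $0$-homogeneous scalar $I$ no ``connection correction in terms of $N$'' is needed, since Landsberg for a surface reads $I_{|m}y^m=\mathbf{G}I=0$, and Theorem D plus left invariance converts this directly into $\tfrac{d}{dt}I(e,y(t))=0$.

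However, the step you yourself flag as ``the main obstacle'' is exactly where the content lies, and it is left unproved; moreover your description of the dynamics is not the actual one. First, the flow of $-\eta$ on $S_F$ is \emph{not} a periodic orbit covering the indicatrix: since $[\mathfrak{g},y]=\mathbb{R}e_2$ for all $y\ne 0$, one has $\eta(y)=0$ iff $g_y(y,e_2)=0$, and by strong convexity this happens at exactly two points of $S_F$. So $S_F$ splits into two fixed points and two open orbits; constancy of $I$ along each orbit yields only two constants $C_1,C_2$, which must then be identified by continuity as $t\to\pm\infty$ (both orbits limit to the same pair of fixed points). Asking for ``no nontrivial periodic solutions'' addresses the wrong question. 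Second, once $I$ is constant on all of $S_F$, one must still show the constant is zero; ``any monotonicity argument or integral obstruction will force $I\equiv 0$'' is an aspiration, not an argument. The paper's closing step is: in dimension two $C_y(w,w,w)$ coincides with the derivative in the direction $w$ of the single-valued function $f(y)=\tfrac12\ln\det(g_{ij}(y))$ on the compact indicatrix, which must vanish at a maximum or minimum of $f$; hence the constant is $0$ and $F$ is Riemannian. (Equivalently, a nonzero constant would make $f$ strictly monotone along the closed curve $S_F$. Note this genuinely uses that the indicatrix is closed --- Berwald's non-Riemannian norms with constant main scalar live on conic domains.) Without these two steps the proof is incomplete.
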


Here we can use Theorem D to give an alternative proof.
\bigskip

\begin{proof}Without loss of generality, we may assume the
homogeneous Finsler surface $(M,F)$ is connected. Denote
$G=I_0(M,F)$ the connected isometry group of $(M,F)$.
Its dimension $\dim G$ may only be 2 or 3. When $\dim G=3$,
$(M,F)$ is a Riemannian surface with constant curvature. When $\dim G=2$,
$F$ can be transferred to a left invariant Landsberg metric on $G$ which is locally isometric to $(M,F)$. When $G$ is Abelian, any left invariant Finsler metric on $G$ is locally Minkowskian.

To summarize, to prove Theorem \ref{thm-4}, we only need to prove any left invariant Landsberg metric $F$ on a two-dimensional non-Abelian connected Lie group $G$ is Riemannian.

For simplicity, we use the same $F$ to denote the Minkowski norm
the metric determines on $\mathfrak{g}$, and $\eta$ the spray vector field determined by (\ref{121}). Notice that  $\eta$ is tangent to the indicatrix $S_F=\{y|F(y)=1\}\subset\mathfrak{g}$.
We can find a basis $\{e_1,e_2\}$ for the Lie algebra $\mathfrak{g}$ of $G$ satisfying $[e_1,e_2]=e_2$ and denote $(g_{ij}(y))$ the Hessian with respect to the linear coordinates $y=y^ie_i$.  So any point $y\in S_F$
satisfies $\eta(y)=0$ iff
$$g_y(y,\mathbb{R}e_2)=g_y(y,[\mathfrak{g},y])
=g_y(\eta(y),\mathfrak{g})=0.$$
By the strong convexity of $F$, there exists exactly two points $y'$ and $y''$ where $\eta$ vanishes. The complement $S_F\backslash\{y',y''\}$ is the disjoint union of two integral curves
$y_1(t)$ and $y_2(t)$ for $-\eta$, with $t\in(-\infty,\infty)$.
By Theorem D, there exist $F$-unit speed geodesics $c_i(t)$ with $c_i(0)=e$ and $\dot{c}_i(t)=(L_{c_i(t)})_* (y_i(t))$ for all $t\in(-\infty,\infty)$.
Let $w_i(t)\in\mathfrak{g}=T_eG$ be the smooth tangent field along $y_i(t)$ (i.e., $g_{y_i(t)}(y_i(t),w_i(t))=0$) satisfying $g_{y_i(t)}(w_i(t),w_i(t))=1$ for all $t$.
After possibly replacing $w_1(t)$ with $-w_1(t)$, we may further assume that $w_1(t)$ and $w_2(t)$ can be smoothly glued and extended to the whole indicatrix $S_F$.

Notice the left translations are isometries, so they preserve fundamental tensors. The tangent field $\dot{c}_i(t)=(L_{c_i(t)})_*(y_i(t))$ is linearly parallel along the geodesic $c_i(t)$. So the speciality of dimension two and
Lemma 5.3.1 in \cite{Sh2003} tell us for each $i=1,2$,
$W_i(t)=(L_{c_i(t)})_*(w_i(t))$ is a linearly parallel vector field
along the geodesic $c_i(t)$.  The Landsberg property indicates the Cartan tensor
of the left invariant metric $F$ satisfies
$C_{\dot{c}_i(t)}(W_i(t),W_i(t),W_i(t))\equiv C_i$,
$t\in(-\infty,\infty)$, for $i=1,2$ respectively \cite{Sh2003}.
Using left translations again, we see
$C_{y_i(t)}(w_i(t),w_i(t),w_i(t))\equiv C_i$, $t\in(-\infty,\infty)$, for
$i=1,2$ respectively, and $C_1= C_2$ by the continuity when $t\rightarrow\pm\infty$.

To summarize,
we have $|C_y(w,w,w)|= C=|C_i|$ for every pair $(y,w)$ satisfying $y\in S_F$, $g_y(y,w)=0$ and $g_y(w,w)=1$. On the other hand, the speciality of dimension two implies that $C_y(w,w,w)$ coincides with the mean Cartan tensor $I(w)$, i.e.,
the derivative in the direction of $w$ for the function $f(y)=\tfrac12\ln\det(g_{ij}(y))$. At $y\in S_F$ where $f$ achieves its maximum or minimum, $C=C_y(w,w,w)=I(w)=0$. For dimension two, this is enough for us to see that the Cartan tensor vanishes identically everywhere, i.e., the left invariant metric $F$ on $G$ is Riemannian.
\end{proof}
\bigskip

\noindent
{\bf Acknowledgement}. The author sincerely thank B. Najafi, A. Tayebi, Ju Tan and Huaifu Liu for helpful discussions.
This paper is supported by Beijing Natural Science Foundation
(No.~Z180004), National Natural Science
Foundation of China (No.~11771331, No.~11821101),
and Capacity Building for Sci-Tech  Innovation -- Fundamental Scientific Research Funds (No. KM201910028021).

\end{document}